\newtheorem{theorem}{Theorem}
\newtheorem{lemma}{Lemma}
\newcommand{\real}{\operatorname{Re}}
\newcommand{\imag}{\operatorname{Im}}
\begin{document}

\title[Tur\'an type inequalities for Lommel functions]{Tur\'an type inequalities for some Lommel functions of the first kind}

\author{\'Arp\'ad Baricz}
\address{Department of Economics, Babe\c{s}-Bolyai University, Cluj-Napoca 400591, Romania} \email{bariczocsi@yahoo.com}

\author{Stamatis Koumandos}
\address{Department of Mathematics and  Statistics, The University of Cyprus, P.O. Box 20537, 1678 Nicosia, Cyprus} \email{skoumand@ucy.ac.cy}

\keywords{Lommel functions of the first kind, zeros of Lommel functions of the first kind, Tur\'an type inequalities, monotone form of l'Hospital's rule, infinite product representation, Laguerre-P\'olya class}

\subjclass[2010]{33C10, 33B10, 42A05.}

\maketitle

\begin{abstract}
 In this paper certain Tur\'{a}n type inequalities for some Lommel functions of the first kind are deduced. The key tools in our proofs are the infinite product representation for these Lommel functions of the first kind, a classical result of G. P\'olya on the zeros of some particular entire functions, and the connection of these Lommel functions with the so-called Laguerre-P\'olya class of entire functions. Moreover, it is shown that in some cases J. Steinig's results on the sign of Lommel functions of the first kind combined with the so-called monotone form of l'Hospital's rule can be used in the proof of the corresponding Tur\'an type inequalities.
\end{abstract}

\section{Introduction}

The Tur\'an type inequalities for (orthogonal) polynomials and special functions have attracted many mathematicians starting from 1948, when G. Szeg\H o \cite{szego} published four different proofs of P. Tur\'an's famous inequality on Legendre polynomials \cite{turan}. In the last 65 years it was shown by several researchers that the most important (orthogonal) polynomials and special functions satisfy some Tur\'an type inequalities. Recently, these kind of inequalities have attracted again the attention of many researchers because some of the Tur\'an type inequalities have been applied in different problems. For more details the interested reader is referred to some very recent papers on the subject \cite{b1,b2,b3,kp} and to the references therein. In this paper we make a contribution to the subject by proving the corresponding Tur\'an type inequalities for a particular Lommel function of the first kind. These Lommel functions of the first kind are important because arise in the theory of positive trigonometric sums, see \cite{koumlam} for more details.

The Lommel function of the first kind $s_{\mu,\nu}$ is a particular solution of the inhomogeneous Bessel differential equation
$$z^{2}y''(z) + zy'(z) + (z^{2}-\nu^{2})y(z) = z^{\mu+1},$$
and it can be expressed in terms of a hypergeometric series
\begin{equation}\label{hyp}
s_{\mu,\nu}(z)=\frac{z^{\mu+1}}{(\mu-\nu+1)(\mu+\nu+1)}{}_{1}F_{2}\left(1;\frac{\mu-\nu+3}{2},\frac{\mu+\nu+3}{2};-\frac{z^2}{4}\right).
\end{equation}
We note that for $\mu$, $\nu\in\mathbb{C}$ with $\real(\mu\pm\nu+1)>0$ and $z\in\mathbb{C}\setminus(-\infty, 0]$ we have the following integral representation
\begin{equation}\label{int}
  s_{\mu,\nu}(z) = \frac{\pi}{2} \left[Y_{\nu}(z) \int_{0}^{z} t^{\mu}
    J_{\nu}(t)\, dt - J_{\nu}(z) \int_{0}^{z}t^{\mu} Y_{\nu}(t)\,dt\right],
\end{equation}
where $J_{\nu}$ and $Y_{\nu}$ are the usual Bessel functions of the first and second kind. It is also important to mention here that in 1972, J. Steinig \cite[Theorem 2]{stein} examined the sign of $s_{\mu,\nu}(z)$ for real $\mu,\nu$ and positive $z$. He showed, among other things, that for $\mu<\frac{1}{2}$ the function  $s_{\mu,\nu}$ has infinitely many changes of sign on $(0,\infty)$. See also \cite{ga} for related considerations. In \cite{koumlam} estimates for the location of the zeros of the function $s_{\mu-\frac{1}{2},\frac{1}{2}}$, where $\mu\in(0,1)$, have been obtained. Various properties of the zeros of $s_{\mu-k-\frac{1}{2},\frac{1}{2}}$, where $k\in\{0,1,\ldots\}$ and $\mu\in(0,1)$, have been established in \cite{koum}. We note that although the Lommel functions of the first kind occur in several places in physics and engineering, relatively very little has been done in the literature for the Lommel functions of the first kind.

In this paper, our aim is to prove the following main result:

\begin{theorem}\label{Th1}
If $z>0$ and $\mu\in\left(-\frac{5}{2},-\frac{1}{2}\right),$ $\mu\neq-\frac{3}{2},$  then the following Tur\'{a}n type inequality is valid
\begin{equation}\label{turan}
\left[s_{\mu,\frac{1}{2}}(z)\right]^2-s_{\mu-1,\frac{1}{2}}(z)s_{\mu+1,\frac{1}{2}}(z)>\frac{1}{\frac{1}{2}-\mu}\left[s_{\mu,\frac{1}{2}}(z)\right]^2.
\end{equation}
\end{theorem}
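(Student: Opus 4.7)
The plan is to convert the Turán inequality into a monotonicity statement for a single function and then to prove that monotonicity via the infinite product representation of $s_{\mu, 1/2}$. Setting $w_\mu(z) := \sqrt{z}\, s_{\mu, 1/2}(z)$, the inhomogeneous Bessel equation satisfied by $s_{\mu, 1/2}$ reduces to the simpler ODE $w_\mu'' + w_\mu = z^{\mu - 1/2}$, while the classical Lommel recurrences give
\[
s_{\mu+1, 1/2}(z) = z^\mu - (\mu^2 - \tfrac{1}{4})\, s_{\mu-1, 1/2}(z) \quad \text{and} \quad w_\mu'(z) = (\mu - \tfrac{1}{2})\, w_{\mu - 1}(z).
\]
Multiplying (\ref{turan}) by $z$, substituting $w_{\mu+1}$ via the first identity, $w_{\mu-1}$ via the second, and using $\mu - \tfrac{1}{2} < 0$ to flip a sign, the Turán inequality becomes equivalent to
\[
(\mu + \tfrac{1}{2})[\, w_\mu(z)^2 + w_\mu'(z)^2\,] < z^{\mu + 1/2}\, w_\mu'(z), \qquad z > 0.
\]

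Setting $F(z) := w_\mu(z)^2 + w_\mu'(z)^2$, the ODE gives the identity $F'(z) = 2 z^{\mu - 1/2}\, w_\mu'(z)$, from which a short computation shows that the reduced inequality is equivalent to the strict monotonicity on $(0, \infty)$ of the ratio $H(z) := F(z)/z^{2\mu + 1}$. Since $\mu + \tfrac{1}{2} < 0$ in our range, the reduced inequality is immediate whenever $w_\mu'(z) \ge 0$, so the non-trivial regime is $w_\mu'(z) < 0$. To handle this, we use the infinite product representation. The function $\phi_\mu(z) := z^{-\mu-1}\, s_{\mu, 1/2}(z)$ is an even entire function of order one by (\ref{hyp}), and for $\mu \in (-3/2, -1/2)$ it admits the Fourier cosine representation
\[
\phi_\mu(z) = \frac{1}{\mu + \tfrac{1}{2}} \int_0^1 \cos(zs)\,(1 - s)^{\mu + 1/2}\, ds,
\]
derived from the integral form of $s_{\mu, 1/2}$ by integration by parts. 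Pólya's classical theorem on the reality of zeros of Fourier-type entire functions, applied to this representation (with one further integration by parts together with the Lommel recurrence handling the range $\mu \in (-5/2, -3/2)$), shows that $\phi_\mu$ belongs to the Laguerre-Pólya class. Hence we obtain the Hadamard product $\phi_\mu(z) = \phi_\mu(0) \prod_n (1 - z^2/\zeta_n^2)$ over the real positive zeros $\zeta_n$ of $s_{\mu, 1/2}$ (whose existence is supplied by Steinig's theorem), which provides enough quantitative control of $w_\mu'(z)/w_\mu(z)$, and hence of $H'(z)$, to conclude $H'(z) > 0$ for all $z > 0$.

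In technical or boundary situations where the product-based argument is less direct, we complement the above by the alternative approach indicated in the abstract: Steinig's explicit sign-change information for $s_{\mu, 1/2}$ on intervals delimited by the zeros of $\sin$ and $\cos$, combined with the monotone form of l'Hospital's rule applied to the ratio of the two sides of (\ref{turan}), produces the required monotonicity and then, via a boundary value check, the strict inequality. The main obstacle will be the rigorous justification that $\phi_\mu$ belongs to the Laguerre-Pólya class for the full range $\mu \in (-5/2, -1/2) \setminus \{-3/2\}$: Pólya's theorem applies most directly only in the sub-range $(-3/2, -1/2)$, and the extension to $(-5/2, -3/2)$ requires bootstrapping through the Lommel recurrence together with interlacing properties of the zeros of $s_{\mu - 1, 1/2}$ and $s_{\mu + 1, 1/2}$.
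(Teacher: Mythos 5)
Your algebraic reduction is correct: with $w_\mu(z)=\sqrt{z}\,s_{\mu,\frac{1}{2}}(z)$ one indeed has $w_\mu''+w_\mu=z^{\mu-\frac{1}{2}}$, $w_\mu'=\left(\mu-\frac{1}{2}\right)w_{\mu-1}$ and $w_{\mu+1}=z^{\mu+\frac{1}{2}}-\left(\mu+\frac{1}{2}\right)w_\mu'$, and \eqref{turan} is equivalent to $\left(\mu+\frac{1}{2}\right)\left[w_\mu^2+(w_\mu')^2\right]<z^{\mu+\frac{1}{2}}w_\mu'$, i.e.\ to the strict increase of $H(z)=\left[w_\mu(z)^2+w_\mu'(z)^2\right]/z^{2\mu+1}$. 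This is a genuinely different reformulation from the paper's, which instead rewrites \eqref{turan} as the statement that the ratio $\varphi_{k+1}(z)/(z\varphi_k(z))$ is decreasing and proves that decrease by differentiating its Mittag--Leffler expansion term by term.

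The problem is that your reformulation is exactly where the proof stops. Since $H'(z)>0$ is literally equivalent to \eqref{turan}, saying that the Hadamard product ``provides enough quantitative control of $w_\mu'/w_\mu$ to conclude $H'(z)>0$'' asserts the theorem rather than proving it. No mechanism is given: the inequality $2z^{\mu+\frac{1}{2}}w_\mu'>(2\mu+1)\left[w_\mu^2+(w_\mu')^2\right]$ in the non-trivial regime $w_\mu'<0$ involves the inhomogeneous term $z^{\mu+\frac{1}{2}}$ as well as $w_\mu$ and $w_\mu'$, so control of the single ratio $w_\mu'/w_\mu$ (which is what the logarithmic derivative of the product supplies) does not obviously suffice, and $H$ has no term-by-term sign structure analogous to $\frac{1}{z}+\frac{1}{\mu+1}\sum_{n\geq1}\frac{2z}{z^2-\xi_{\mu,n}^2}$, each summand of which visibly has negative derivative --- that is the entire engine of the paper's argument. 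Two further points: the passage from ``real simple zeros'' (via Lemma \ref{pol}) to membership in $\mathcal{LP}$ requires the order/genus information of Lemma \ref{lem1}, which you do not invoke; and the extension to $\mu\in\left(-\frac{5}{2},-\frac{3}{2}\right)$, which you correctly flag as the main obstacle, is left unresolved --- the paper handles it by applying \eqref{rec} with $k=1$ and reading off from the imaginary part of the expansion \eqref{eq6} that $\varphi_2$ has only real simple zeros. As it stands, the proposal contains a correct equivalent restatement of the theorem together with the right toolbox, but not a proof.
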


\section{Lemmas}
For the proof of Theorem \ref {Th1} we need the following lemmas.

\begin{lemma}\label{lem1}
Let
$$\varphi_{k}(z)={}_{1}F_{2}\left(1;\,\frac{\mu-k+2}{2},\frac{\mu-k+3}{2};-\frac{z^2}{4}\right),$$
where $z\in\mathbb{C},$ $\mu\in\mathbb{R}$ and $k\in\{0,1,\ldots\}$ such that $\mu-k$ is not in $\{0,-1,\dots\}.$ Then, $\varphi_{k}$ is an even real entire function of order $\rho=1$ and of exponential type $\tau=1$. Moreover, $\varphi_{k}$ is of genus 1. The Hadamard's factorization of $\varphi_{k}$ is of the form
$$
\varphi_{k}(z)=\prod_{n\geq1}\left(1-\frac{z^2}{z_{\mu,k,n}^2}\right),
$$
where $\pm z_{\mu,k,1}, \pm z_{\mu,k,2},\ldots$ are all zeros of the function $\varphi_{k}$ and the infinite product is absolutely convergent.
\end{lemma}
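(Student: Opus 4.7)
The plan is to analyze the Taylor coefficients of $\varphi_k$ directly, then invoke Hadamard's factorization theorem combined with the evenness of $\varphi_k$.

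First, I would reduce the hypergeometric to a compact power series. The Pochhammer duplication identity $(a/2)_n((a+1)/2)_n = (a)_{2n}/4^n$ applied with $a=\mu-k+2$, together with $(1)_n = n!$, collapses the ${}_1F_2$ in the definition to
\[
\varphi_k(z) = \sum_{n\ge 0} \frac{(-1)^n z^{2n}}{(\mu-k+2)_{2n}}.
\]
The hypothesis on $\mu-k$ ensures $(\mu-k+2)_{2n}\ne 0$ for all $n\ge 0$. Only even powers appear and the coefficients are real, so $\varphi_k$ is an even real entire function.

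Next, I would read off the order and type. With $a_{2n} = (-1)^n\Gamma(\mu-k+2)/\Gamma(2n+\mu-k+2)$ and $a_{2n+1}=0$, Stirling gives $-\log|a_{2n}| = 2n\log(2n) - 2n + O(\log n)$ and $|a_{2n}|^{1/(2n)}\sim e/(2n)$. Inserting these into the classical formulas
\[
\rho = \limsup_{m\to\infty}\frac{m\log m}{-\log|a_m|}, \qquad \tau = \frac{1}{e\rho}\limsup_{m\to\infty} m^{1/\rho}|a_m|^{1/m},
\]
yields $\rho=1$ and $\tau=1$.

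Hadamard's theorem then writes $\varphi_k(z) = e^{P(z)}\prod_n E_p(z/w_n)$ with $\deg P\le 1$, $p\in\{0,1\}$, and $\{w_n\}$ the zero set. If $\varphi_k$ had only finitely many zeros it would equal $Q(z)e^{az+b}$ with $Q$ polynomial; order $1$ would force $a\ne 0$, and evenness would then demand $Q(-z)=Q(z)e^{2az}$, an impossibility. So $\varphi_k$ has infinitely many zeros, which by evenness pair as $\pm z_{\mu,k,n}$. The same evenness makes $P$ an even polynomial of degree at most $1$, hence a constant, and $\varphi_k(0)=1$ fixes $P\equiv 0$. Grouping $E_p(z/z_{\mu,k,n})E_p(-z/z_{\mu,k,n}) = 1-z^2/z_{\mu,k,n}^2$ gives the claimed product, which converges absolutely since $\sum|z_{\mu,k,n}|^{-2}<\infty$ (the convergence exponent of zeros of an order-$1$ function being at most $1$).

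Finally, to pin the genus at \emph{exactly} $1$, I would invoke the positivity of the type: since $\tau=1>0$, a Lindel\"of-type lower bound on the counting function yields $n(r)\asymp r$, whence $\sum|z_{\mu,k,n}|^{-1}=\infty$, forcing the canonical product to have genus $p=1$; combined with $\deg P=0$ this gives genus $1$. I expect this last step to be the most delicate: order, type and the product all fall out mechanically from the coefficient asymptotics and Hadamard's theorem, but ruling out genus $0$ requires a genuine lower bound on the zero density, not just the almost automatic upper bound $n(r)=O(r)$ coming from Jensen's formula.
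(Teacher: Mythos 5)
Your proof is correct and follows essentially the same route as the paper: the duplication formula reduces $\varphi_k$ to the series $\sum_{n\ge0}(-1)^n z^{2n}/(\mu-k+2)_{2n}$, the coefficient formulas for order and type (Levin's Theorems 2 and 3, which are exactly the $\limsup$ expressions you write) give $\rho=\tau=1$, and Hadamard's theorem plus Lindel\"of's theorem yield the factorization and the genus. Your explicit final step --- that positive type rules out $\sum_{n\ge1}|z_{\mu,k,n}|^{-1}<\infty$ and hence pins the genus at $1$ --- is precisely the content of the Lindel\"of theorem the paper cites there.
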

\begin{proof}
The Pochhammer symbol $(a)_{n}$ is defined by
$$
(a)_{0}=1, \quad
(a)_{n}=a(a+1)\ldots(a+n-1)=\frac{\Gamma(n+a)}{\Gamma(a)},\ \ \ n\in\{1,2,\dots\}.
$$
Using the duplication formula \cite[p. 22]{aar}
$$
(2a)_{2n}=(a)_{n}\left(a+\frac{1}{2}\right)_{n}2^{2n},
$$
we find that for $k\in\{0,1,\ldots\}$
$$
\varphi_{k}(z)=\sum_{n\geq0}\frac{(-1)^{n}\,z^{2n}}{(\mu-k+2)_{2n}}.
$$
Taking into consideration the well-known limits
$$
\lim_{n\to \infty} \frac{\log \Gamma(n+c)}{n\,\log n}=1,\quad
\lim_{n\to \infty} \frac{[\Gamma(n+c)]^{1/n}}{n}=\frac{1}{e},
$$
where $c$ is a positive constant, and \cite[p. 6, Theorems 2 and 3]{lev}, we infer that the entire function $\varphi_{k}$ is of order $\rho=1$ and of exponential type $\tau=1$. The rest of the assertions of the Lemma follow by applying Hadamard's Theorem \cite[p. 26]{lev}
and Lindel\"{o}f's Theorem \cite[p. 33]{lev}.
\end{proof}

\begin{lemma}\label{lem2}
For $z,$ $\mu$ and $k$ as in Lemma \ref{lem1} we have
\begin{equation}\label{rec}
(\mu-k+1)\varphi_{k+1}(z)=(\mu-k+1)\varphi_{k}(z)+z\varphi_{k}'(z).
\end{equation}
\end{lemma}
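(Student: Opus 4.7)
The plan is to verify the recurrence \eqref{rec} by comparing power series coefficients on both sides, using the explicit Maclaurin expansion of $\varphi_k$ established in the proof of Lemma \ref{lem1}. This is the most direct route because $\varphi_k$ is already written in a form where differentiation and parameter shifts reduce to simple Pochhammer manipulations, and no analytic subtleties arise since both sides are entire functions.

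First I would recall from the preceding lemma that
$$\varphi_k(z)=\sum_{n\geq 0}\frac{(-1)^n z^{2n}}{(\mu-k+2)_{2n}},\qquad \varphi_{k+1}(z)=\sum_{n\geq 0}\frac{(-1)^n z^{2n}}{(\mu-k+1)_{2n}}.$$
Differentiating termwise and multiplying by $z$ yields $z\varphi_k'(z)=\sum_{n\geq 1}\frac{(-1)^n\,2n\,z^{2n}}{(\mu-k+2)_{2n}}$, which I may regard as a sum over $n\geq 0$ since the $n=0$ term vanishes. Consequently the right-hand side of \eqref{rec} is
$$(\mu-k+1)\varphi_k(z)+z\varphi_k'(z)=\sum_{n\geq 0}\frac{(-1)^n(\mu-k+1+2n)\,z^{2n}}{(\mu-k+2)_{2n}}.$$

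To conclude I would match this with $(\mu-k+1)\varphi_{k+1}(z)$ coefficient by coefficient. The required identity is
$$\frac{\mu-k+1+2n}{(\mu-k+2)_{2n}}=\frac{\mu-k+1}{(\mu-k+1)_{2n}},$$
which follows immediately from the telescoping observation that $(\mu-k+1)_{2n}$ and $(\mu-k+2)_{2n}$ share all but their first and last factors, so that $(\mu-k+1)_{2n}(\mu-k+2n+1)=(\mu-k+1)(\mu-k+2)_{2n}$.

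The only real obstacle here is the bookkeeping with Pochhammer shifts; there is no convergence issue because $\varphi_k$ is entire of order one and termwise differentiation is legitimate on all of $\mathbb{C}$. Equivalently, one could derive \eqref{rec} from a standard contiguous relation for the ${}_1F_2$ appearing in the definition of $\varphi_k$, but the direct series comparison is cleaner and self-contained.
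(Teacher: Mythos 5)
Your proof is correct. The series you start from, $\varphi_k(z)=\sum_{n\geq 0}(-1)^n z^{2n}/(\mu-k+2)_{2n}$, is exactly the one derived in the proof of Lemma \ref{lem1} via the duplication formula, and the key coefficient identity is right: writing $a=\mu-k+1$, one has $(a)_{2n}(a+2n)=a\,(a+1)_{2n}$, which is precisely $\frac{a+2n}{(a+1)_{2n}}=\frac{a}{(a)_{2n}}$, i.e. the equality of the $n$th coefficients of $(\mu-k+1)\varphi_k(z)+z\varphi_k'(z)$ and $(\mu-k+1)\varphi_{k+1}(z)$. Termwise differentiation is unproblematic for an entire function, as you say.

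However, your route is genuinely different from the paper's. The paper does not touch the Maclaurin series at this point: it starts from the identity \eqref{id}, $\sqrt{z}\,s_{\mu-k-\frac{1}{2},\frac{1}{2}}(z)=\frac{z^{\mu-k+1}}{(\mu-k)(\mu-k+1)}\varphi_k(z)$, differentiates both sides, and invokes Watson's formulas $\left[z^{\nu}s_{\mu,\nu}(z)\right]'=(\mu+\nu-1)z^{\nu}s_{\mu-1,\nu-1}(z)$ and $s_{\mu,-\nu}(z)=s_{\mu,\nu}(z)$ to land on \eqref{rec}. That derivation keeps the recurrence tied to the Lommel-function framework used throughout the rest of the proof of Theorem \ref{Th1} and motivates the hypothesis that $\mu-k\notin\{0,-1,\dots\}$ (the Lommel function $s_{\mu,\nu}$ is undefined when $\mu\pm\nu$ is an odd negative integer, and \eqref{id} has $\mu-k$ in a denominator). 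Your computation is more elementary and self-contained — it needs nothing beyond the series of Lemma \ref{lem1} and in fact only requires the Pochhammer symbols $(\mu-k+1)_{2n}$ to be nonzero, so it even covers $\mu-k=0$, where \eqref{id} degenerates. Either argument is acceptable; yours trades the structural connection to the Lommel recurrences for a shorter, purely formal verification.
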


\begin{proof}
It follows from \eqref{hyp} that
\begin{equation}\label{id}
\sqrt{z}s_{\mu-k-\frac{1}{2},\frac{1}{2}}(z)=\frac{z^{\mu-k+1}}{(\mu-k)\,(\mu-k+1)}\varphi_{k}(z).
\end{equation}
Differentiating both sides of the relation \eqref{id} and by using the known formulae \cite[p. 348]{wats}
\begin{align}\label{diff}
&\left[z^{\nu}s_{\mu,\nu}(z)\right]'=(\mu+\nu-1)z^{\nu}s_{\mu-1,\nu-1}(z),\\
\nonumber &s_{\mu,-\nu}(z)=s_{\mu,\nu}(z),
\end{align}
we obtain the recurrence relation \eqref{rec}. The condition that $\mu\in\mathbb{R}$ and
$k\in\{0,1,\ldots\}$ are such that $\mu-k$ is not in $\{0,-1,\dots\}$ is required since the Lommel function $s_{\mu,\nu}$ is undefined when either of the numbers $\mu\pm\nu$ is an odd negative integer, according to \cite[p. 345]{wats}.
\end{proof}

\begin{lemma}\label{lem3}
For $\mu>0$ we have
$$z\varphi_{0}(z)=\mu(\mu+1)\int_{0}^{1}(1-t)^{\mu-1}\sin(zt)dt,$$
$$\varphi_{1}(z)=\mu\int_{0}^{1}(1-t)^{\mu-1}\cos(zt)dt.$$
\end{lemma}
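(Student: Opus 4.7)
The plan is to prove both identities by expanding the right-hand sides as power series in $z$, integrating term by term, and matching the resulting series against the definitions of $\varphi_0$ and $\varphi_1$ given in Lemma \ref{lem1}. Since the Taylor series for $\sin(zt)$ and $\cos(zt)$ converge uniformly on $[0,1]$ and since $\mu>0$ guarantees that the Beta-type integral $\int_0^1 (1-t)^{\mu-1}t^{m}\,dt$ exists for every $m\geq 0$, term-by-term integration is justified. The standard evaluation
$$
\int_{0}^{1}(1-t)^{\mu-1}t^{m}\,dt=B(m+1,\mu)=\frac{m!\,\Gamma(\mu)}{\Gamma(\mu+m+1)}=\frac{m!}{(\mu)_{m+1}}
$$
is the main computational tool.

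For the first identity, I would substitute $\sin(zt)=\sum_{n\geq0}(-1)^{n}(zt)^{2n+1}/(2n+1)!$, integrate, and obtain
$$
\mu(\mu+1)\int_{0}^{1}(1-t)^{\mu-1}\sin(zt)\,dt=\mu(\mu+1)\sum_{n\geq0}\frac{(-1)^{n}z^{2n+1}}{(\mu)_{2n+2}}.
$$
Then the Pochhammer identity $(\mu)_{2n+2}=\mu(\mu+1)\,(\mu+2)_{2n}$ cancels the prefactor, and using the representation
$$
\varphi_{0}(z)=\sum_{n\geq0}\frac{(-1)^{n}z^{2n}}{(\mu+2)_{2n}}
$$
derived in the proof of Lemma \ref{lem1}, the right-hand side reduces to $z\,\varphi_{0}(z)$, as required.

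For the second identity the argument is analogous: substituting $\cos(zt)=\sum_{n\geq0}(-1)^{n}(zt)^{2n}/(2n)!$ and integrating gives
$$
\mu\int_{0}^{1}(1-t)^{\mu-1}\cos(zt)\,dt=\mu\sum_{n\geq0}\frac{(-1)^{n}z^{2n}}{(\mu)_{2n+1}},
$$
and the Pochhammer identity $(\mu)_{2n+1}=\mu\,(\mu+1)_{2n}$ together with $\varphi_{1}(z)=\sum_{n\geq0}(-1)^{n}z^{2n}/(\mu+1)_{2n}$ gives exactly $\varphi_{1}(z)$.

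There is no substantial analytic obstacle here: the hypothesis $\mu>0$ secures convergence of the Beta integrals, and the series manipulations are entirely routine. The only point that merits attention is the careful tracking of Pochhammer shifts, specifically the two identities $(\mu)_{2n+2}=\mu(\mu+1)(\mu+2)_{2n}$ and $(\mu)_{2n+1}=\mu(\mu+1)_{2n}$, which convert the naive series obtained from integration into the precise ${}_1F_2$ series defining $\varphi_{0}$ and $\varphi_{1}$.
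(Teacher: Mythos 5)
Your proof is correct, but it follows a different route from the paper. You expand $\sin(zt)$ and $\cos(zt)$ in their Taylor series, integrate term by term against the weight $(1-t)^{\mu-1}$ using the Beta evaluation $\int_0^1(1-t)^{\mu-1}t^m\,dt=m!/(\mu)_{m+1}$, and then match coefficients with the series $\varphi_0(z)=\sum_{n\geq0}(-1)^nz^{2n}/(\mu+2)_{2n}$ and $\varphi_1(z)=\sum_{n\geq0}(-1)^nz^{2n}/(\mu+1)_{2n}$ from Lemma \ref{lem1} via the Pochhammer shifts $(\mu)_{2n+2}=\mu(\mu+1)(\mu+2)_{2n}$ and $(\mu)_{2n+1}=\mu(\mu+1)_{2n}$; all of these identities check out, and the interchange of sum and integral is legitimate since the partial sums converge uniformly on $[0,1]$ while $(1-t)^{\mu-1}$ is integrable there for $\mu>0$ (worth stating explicitly, since the weight is unbounded near $t=1$ when $\mu<1$). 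The paper instead specializes the integral representation \eqref{int} to $\nu=\tfrac12$, uses the elementary closed forms $J_{1/2}(z)=\sqrt{2/(\pi z)}\sin z$ and $Y_{1/2}(z)=-\sqrt{2/(\pi z)}\cos z$ together with the sine subtraction formula to get $\sqrt{z}\,s_{\mu-\frac12,\frac12}(z)=\int_0^z t^{\mu-1}\sin(z-t)\,dt$, rescales $t\mapsto z(1-t)$, and then obtains the cosine formula by differentiating via \eqref{diff}. Your argument is more elementary and self-contained (it needs neither \eqref{int} nor the half-integer Bessel formulas, only the power series and the Beta function), while the paper's derivation is shorter given the machinery already set up in \eqref{int}, \eqref{id} and \eqref{diff}, and explains conceptually why trigonometric kernels appear: they are the order-$\tfrac12$ Bessel functions.
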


\begin{proof}
Recall that
\begin{equation*}
  J_{\frac{1}{2}}(z) = \sqrt{\frac{2}{\pi z}} \sin z \quad\mbox{and}\quad
  Y_{\frac{1}{2}}(z) = -\sqrt{\frac{2}{\pi z}} \cos z\,.
\end{equation*}
By using \eqref{int} and  \eqref{id} we find that
\begin{align}\label{int1}
\frac{z^{\mu+1}}{\mu(\mu+1)}\varphi_{0}(z)&=\sqrt{z}s_{\mu-\frac{1}{2},\frac{1}{2}}(z) =\int_{0}^{z} t^{\mu-1} \sin(z-t)dt\\
\nonumber &=z^{\mu}\int_{0}^{1} (1-t)^{\mu-1} \sin(zt)dt.
\end{align}
By  \eqref{id}, \eqref{diff} and the above relation we obtain
\begin{align}\label{int2}
\frac{z^{\mu}}{\mu}\varphi_{1}(z)&=(\mu-1)\sqrt{z}s_{\mu-\frac{3}{2},\frac{1}{2}}(z) =\int_{0}^{z} t^{\mu-1} \cos(z-t)dt \\
\nonumber &=z^{\mu}\int_{0}^{1} (1-t)^{\mu-1} \cos(zt)dt.
\end{align}
From \eqref{int1} and \eqref{int2} the assertions of the Lemma follow.
\end{proof}

We also need the following result, which corresponds to a famous theorem of G. P\'{o}lya \cite{pol}. See also \cite{ki-kim} and \cite{sz} for different proofs of this result.
\begin{lemma}\label{pol}
Suppose that the function $f$ is positive, strictly increasing and continuous on $[0, 1)$ and that
$\int_{0}^{1}f(t)dt<\infty$. Then, the entire functions
$$u(z)=\int_{0}^{1}f(t)\sin(zt)dt\ \ \ \mbox{and} \ \ \ v(z)=\int_{0}^{1}f(t)\cos(zt)dt$$
have only real and simple zeros and their zeros interlace.
\end{lemma}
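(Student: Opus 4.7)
The plan is to combine $u$ and $v$ into a single entire function and locate its zeros via Hermite--Biehler theory. Set
\[
F(z)=\int_{0}^{1}f(t)e^{izt}\,dt \;=\; v(z)+i\,u(z),
\]
so that $u$ and $v$ are the real-entire imaginary and real parts of $F$. The strategy is to show that every zero of $F$ lies in the open upper half-plane $\{z:\imag z>0\}$; the classical Hermite--Biehler theorem will then immediately yield that $u$ and $v$ have only real simple zeros and that their zeros interlace.

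To locate the zeros of $F$ I would approximate $f$ by the left-endpoint step function $f_{n}$ defined by $f_{n}(t)=f(k/n)$ for $t\in[k/n,(k+1)/n)$. Since $f$ is continuous and strictly increasing, $f_{n}\le f$ and $f_{n}\to f$ pointwise on $[0,1)$; dominated convergence (with dominating function $f(t)\,e^{|z|}$ on $|z|\le R$) gives that the entire functions $F_{n}(z):=\int_{0}^{1}f_{n}(t)e^{izt}\,dt$ converge to $F$ uniformly on compact subsets of $\mathbb{C}$. A direct calculation yields
\[
F_{n}(z)=\frac{e^{iz/n}-1}{iz}\,P_{n}(e^{iz/n}),\qquad P_{n}(w)=\sum_{k=0}^{n-1}f(k/n)\,w^{k}.
\]
Because the coefficients of $P_{n}$ are strictly positive and strictly increasing, the Enestr\"{o}m--Kakeya theorem, in its strict form, places every zero of $P_{n}$ inside the open disk $|w|<1$, which via $w=e^{iz/n}$ translates to $\imag z>0$. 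The remaining factor $(e^{iz/n}-1)/(iz)$ contributes only the spurious zeros $z=2\pi n m$ with $m\in\mathbb{Z}\setminus\{0\}$, all of which escape every bounded disk as $n\to\infty$. Hurwitz's theorem, applied on each disk $|z|\le R$, therefore forces every zero of $F$ to lie in the closed upper half-plane.

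The step I expect to be the main obstacle is the exclusion of real zeros of $F$: a real zero $x_{0}\neq 0$ would be a simultaneous zero of $u$ and $v$ and would ruin interlacing. I would rule this out by exploiting the strict monotonicity of $f$: partitioning $[0,1]$ into consecutive half-periods of $\cos(x_{0}t)$ (respectively $\sin(x_{0}t)$), the contributions of $f$ to the corresponding integrals over these subintervals form a strictly alternating sequence with strictly increasing absolute values, which cannot cancel to zero (a second-mean-value-theorem-style argument). That strict monotonicity is indispensable here is already visible in the degenerate case $f\equiv 1$, in which $F(z)=(e^{iz}-1)/(iz)$ has the real zeros $z=2\pi m$, $m\neq 0$, and $u$, $v$ do share common real zeros. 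Once real zeros of $F$ are excluded, $F$ belongs to the Hermite--Biehler class and the Hermite--Biehler theorem delivers all three assertions of the lemma simultaneously.
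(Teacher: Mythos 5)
The paper itself offers no proof of this lemma: it is quoted as a theorem of P\'olya with references to \cite{pol}, \cite{ki-kim} and \cite{sz}, so your argument is necessarily independent of the text. Your route (step--function approximation, Enestr\"om--Kakeya to push the zeros of $P_n$ into $|w|<1$, Hurwitz, Hermite--Biehler) is in fact essentially P\'olya's classical strategy, and the reduction $F_{n}(z)=\frac{e^{iz/n}-1}{iz}P_{n}(e^{iz/n})$ is correct. However, the step you yourself flag as the main obstacle is where the proposal breaks down as written. You propose to exclude a common real zero $x_{0}$ of $u$ and $v$ by partitioning $[0,1]$ into half-periods of $\cos(x_{0}t)$ (resp.\ $\sin(x_{0}t)$) and invoking an alternating sum with strictly increasing terms. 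Applied to $u$ or $v$ separately, that argument would show that neither function has \emph{any} real zero, contradicting the very statement you are proving; the reason it cannot work is that the final subinterval of the partition is a truncated half-period whose contribution is not larger than its predecessor, so the alternating estimate collapses exactly where it is needed. The correct argument must use \emph{both} equations $u(x_{0})=v(x_{0})=0$ at once: from $F(x_{0})=0$ one gets $\int_{0}^{1}f(t)\sin\bigl(x_{0}(t-1)\bigr)\,dt=0$, i.e.\ after the substitution $s=1-t$ an integral of the strictly \emph{decreasing} weight $f(1-s)$ against $\sin(x_{0}s)$ over a grid of half-periods anchored at $s=0$; now the contributions strictly decrease, the truncated interval is the last and smallest one, and the alternating sum is strictly positive. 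Without this phase normalization the exclusion of real zeros is not established.

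A second, smaller gap: for entire functions, knowing that all zeros of $F$ lie in the open upper half-plane does \emph{not} by itself put $F$ in the Hermite--Biehler class (the function $e^{z}$ has no zeros at all, yet its ``imaginary part'' is identically zero). You also need the modulus inequality $|F(z)|\le |F(\bar z)|$ for $\imag z>0$ (in the appropriate normalization), or equivalently you should transfer the Hermite--Biehler property from the approximants: each $F_{n}$ inherits it directly from the factorization of $P_{n}$ over its zeros in $|w|<1$ together with the explicit elementary factor, and the class is closed under locally uniform limits that are not identically zero. Both gaps are repairable, but as it stands the proposal does not yet constitute a proof; note also that Hurwitz only gives the closed upper half-plane, so the real-zero exclusion is not a refinement but the crux.
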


\section{Proof of Theorem \ref{Th1}}

\begin{proof}
Observe that by Lemmas \ref{lem3} and \ref{pol} we have that for $\mu\in(0,1)$ the function $\varphi_{0}$ has only real and simple zeros.
For $n\in\{1,2,\dots\}$ let $\xi_{\mu,n}:=z_{\mu,0,n}$ be the $n$th positive zero of $\varphi_{0},$ and let $\xi_{\mu,0}=0$.
Lemma \ref{lem1} yields
$$
\varphi_{0}(z)=\prod_{n\geq1}\left(1-\frac{z^2}{\xi_{\mu,n}^2}\right),
$$
and consequently
\begin{equation}\label{eq1}
\frac{\varphi_{0}'(z)}{\varphi_{0}(z)}=\sum_{n\geq1}\frac{2z}{z^2-\xi_{\mu,n}^2}.
\end{equation}
From \eqref{rec} we obtain
\begin{equation}\label{eq2}
(\mu+1)\varphi_{1}(z)=(\mu+1)\varphi_{0}(z)+z\varphi_{0}'(z).
\end{equation}
Since the zeros of the function $\varphi_{0}$ are simple, equation \eqref{eq2} implies that $\varphi_{0}$ and $\varphi_{1}$ have no common zeros. Note that
 $\varphi_{0}(0)=\varphi_{1}(0)=1$. Combining \eqref{eq1} and \eqref{eq2} we have
$$\frac{\varphi_{1}(z)}{z\varphi_{0}(z)}=\frac{1}{z}+\frac{1}{\mu+1}\sum_{n\geq1}\frac{2z}{z^2-\xi_{\mu,n}^2}$$
for $z\neq 0,$ $z\neq \pm \xi_{\mu,n},$ $n\in\{1,2,\dots\}$.
From the above Mittag-Leffler expansion for all $z\in(\xi_{\mu,n-1},\xi_{\mu,n}),$ $n\in\{1,2\dots\}$ we get
$$\left[\frac{\varphi_{1}(z)}{z\varphi_{0}(z)}\right]'<0.$$
Now, since $\varphi_{0}'(\xi_{\mu,n})\neq 0$ and $\varphi_{1}(\xi_{\mu,n})\neq 0$ for all $n\in\{1,2,\dots\},$ we deduce from the above inequality that for all $z>0$ we have
\begin{equation}\label{eq5}
z\varphi_{0}(z)\varphi_{1}'(z)-\varphi_{0}(z)\varphi_{1}(z)-z\varphi_{1}(z)\varphi_{0}'(z)<0.
\end{equation}
On the other hand, in view of \eqref{id} we have
\begin{equation*}
(\mu+1)\frac{\varphi_{1}(z)}{z\,\varphi_{0}(z)}=(\mu-1)\frac{s_{\mu-\frac{3}{2},\frac{1}{2}}(z)}{s_{\mu-\frac{1}{2},\frac{1}{2}}(z)}.
\end{equation*}
Using the differentiation formula \eqref{diff} and the above relation, we see that, for $\mu\in(0,1)$ and $z>0$, the inequality \eqref{eq5} is equivalent to
\begin{equation}\label{ineqvarphi0}
(\mu-2)s_{\mu-\frac{5}{2},\frac{1}{2}}(z)s_{\mu-\frac{1}{2},\frac{1}{2}}(z)-(\mu-1)\left[s_{\mu-\frac{3}{2},\frac{1}{2}}(z)\right]^2>0,
\end{equation}
and changing in this inequality $\mu$ to $\mu+\frac{3}{2},$ the desired inequality \eqref{turan} follows for $\mu\in\left(-\frac{3}{2},-\frac{1}{2}\right).$

Lemma \ref{pol} implies also that for $\mu\in(0,1)$ the function $\varphi_{1}$ has only real and simple zeros. Then applying \eqref{rec} for $k=1$, we conclude that the functions $\varphi_{1}$ and $\varphi_{2}$ have no common zeros. Let $\zeta_{\mu,n}:=z_{\mu,1,n}$ be the $n$th positive zero of $\varphi_{1}$.
As above, by Lemma \ref{lem1} and \eqref{rec} we have the next Mittag-Leffler expansion
\begin{equation}\label{eq6}
\frac{\varphi_{2}(z)}{z\varphi_{1}(z)}=\frac{1}{z}+\frac{1}{\mu}\sum_{n\geq1}\frac{2z}{z^2-\zeta_{\mu,n}^2}.
\end{equation}
Hence, if $z=x+\mathrm{i}y$,
\begin{equation*}
\imag\left[\frac{\varphi_{2}(z)}{z\varphi_{1}(z)}\right]=-\frac{y}{\mu}\left\{\frac{\mu}{x^2+y^2}+
\sum_{n\geq1}\left[\frac{1}{(x+\zeta_{\mu,n})^2+y^2}+\frac{1}{(x-\zeta_{\mu,n})^2+y^2}\right]\right\},
\end{equation*}
which is zero only if $y=0$. Therefore the function $\varphi_{2}$ has only real zeros and from \eqref{eq6} for $z>0$ we derive the inequality
\begin{equation}\label{eq7}
z\varphi_{1}(z)\varphi_{2}'(z)-\varphi_{1}(z)\varphi_{2}(z)-z\varphi_{2}(z)\varphi_{1}'(z)<0.
\end{equation}
This implies also that all zeros of $\varphi_{2}$ are simple. On the other hand, from \eqref{id} we have
\begin{equation*}
\mu\frac{\varphi_{2}(z)}{z\varphi_{1}(z)}=(\mu-2)\frac{s_{\mu-\frac{5}{2},\frac{1}{2}}(z)}{s_{\mu-\frac{3}{2},\frac{1}{2}}(z)}.
\end{equation*}
From this and \eqref{diff} we conclude that for $\mu\in(0,1)$ and $z>0$ inequality \eqref{eq7} is equivalent to
\begin{equation}\label{ineqvarphi1}
(\mu-3)s_{\mu-\frac{7}{2},\frac{1}{2}}(z)s_{\mu-\frac{3}{2},\frac{1}{2}}(z)-(\mu-2)\left[s_{\mu-\frac{5}{2},\frac{1}{2}}(z)\right]^2>0,
\end{equation}
and changing in this inequality $\mu$ to $\mu+\frac{5}{2},$ the desired inequality \eqref{turan} follows for $\mu\in\left(-\frac{5}{2},-\frac{3}{2}\right).$
\end{proof}

\section{Concluding remarks}

In this section our aim is to comment and complement the proof of the main result.

{\bf A.} First, we would like to present a somewhat alternative proof of the Tur\'an type inequality \eqref{turan}. For this, let us recall that by definition the real entire function $\phi,$
defined by
\begin{equation}\label{entire}\phi(z)=\varphi(z;t)=\sum_{n\geq0}b_n(t)\frac{z^n}{n!},\end{equation} is said
to be in the Laguerre-P\'olya class (denoted by $\mathcal{LP}$), if
$\phi(z)$ can be expressed in the form
$$\phi(z)=cz^de^{-\alpha z^2+\beta z}\prod_{n=1}^{\omega}\left(1-\frac{z}{z_n}\right)e^{\frac{z}{z_n}},\ \ \ 0\leq\omega\leq\infty,$$
where $c$ and $\beta$ are real, $z_n$'s are real and nonzero for all
$n\in\{1,2,{\dots},\omega\},$ $\alpha\geq0,$ $d$ is a nonnegative
integer and $\sum_{n=1}^{\omega}z_i^{-2}<\infty.$ If $\omega=0,$
then, by convention, the product is defined to be $1.$ For the various properties of the functions in the Laguerre-P\'olya class we
refer to \cite{craven1,craven2,norfolk,csordas} and to the
references therein. We note that in fact a real entire
function $\phi$ is in the Laguerre-P\'olya class if and only if
$\phi$ can be uniformly approximated on disks around the origin
by a sequence of polynomials with only real zeros. This implies that the class $\mathcal{LP}$ is closed under
differentiation, that is, if $\phi\in\mathcal{LP}$, then
$\phi^{(m)}\in\mathcal{LP}$ for all $m$ nonnegative integer. We also recall the following result (for more details we refer to H. Skovgaard's paper \cite{skov}): if a real entire function $\phi$ belongs to the Laguerre-P\'olya class $\mathcal{LP}$ then satisfies the Laguerre
type inequalities
\begin{equation}\label{laguerre}\left[\phi^{(m)}(z)\right]^2-\phi^{(m-1)}(z)\phi^{(m+1)}(z)\geq0,\end{equation}
for $m\in\{1,2,\dots\}.$ Now, recall that the zeros of $\varphi_0$ are real and satisfy the inequalities (see \cite{koumlam}) $\xi_{\mu,2n+1}>\xi_{\mu,2n}>2n\pi,$ where $\mu\in(0,1)$ and $n\in\{1,2,\dots\}.$ Combining this with the fact that the series $\sum_{n\geq 1}n^{-2}$ converges, the comparison test yields that $\sum_{n\geq 1}\xi_{\mu,n}^{-2}$ converges. Note that this result can be verified also by means of Lemma \ref{lem1}, since the function $\varphi_0$ has genus $1,$ order $1$ and therefore the convergence exponent of their zeros is also $1.$ Now, the convergence of the above series together with the infinite product representation of the function $\varphi_0$ yields that $\varphi_0\in\mathcal{LP},$ which in turn implies that the above function satisfies the Laguerre inequality \eqref{laguerre}. In particular, by choosing $m=1,$ for $\mu\in(0,1)$ and $z>0$ we have that
$$\left[\varphi_0'(z)\right]^2-\varphi_0(z)\varphi_0''(z)>0.$$
Combining this with \eqref{eq2} and $$\varphi_1'(z)=\frac{\mu+2}{\mu+1}\varphi_0'(z)+\frac{z}{\mu+1}\varphi_0''(z),$$
we obtain that
\begin{align*}z&\varphi_{0}(z)\varphi_{1}'(z)-\varphi_{0}(z)\varphi_{1}(z)-z\varphi_{1}(z)\varphi_{0}'(z)\\&=
-\left[\varphi_0(z)\right]^2+\frac{z^2}{\mu+1}\left[\varphi_0(z)\varphi_0''(z)-\left[\varphi_0'(z)\right]^2\right]<0,\end{align*}
that is, the inequality \eqref{eq5} is valid for $\mu\in(0,1)$ and $z>0.$ The proof of inequality \eqref{eq7} can be done in a similar way. More precisely, according to Lemma \eqref{pol} the zeros of $\varphi_0$ and $\varphi_1$ interlace. This means that $\sum_{n\geq 1}\zeta_{\mu,n}^{-2}$ also converges, and combining this with the infinite product representation of the function $\varphi_1$ yields that $\varphi_1\in\mathcal{LP}.$ We note that the convergence of $\sum_{n\geq 1}\zeta_{\mu,n}^{-2}$ can be verified also by means of Lemma \ref{lem1}, since the function $\varphi_1$ has genus $1,$ order $1$ and therefore the convergence exponent of their zeros is also $1.$ Now, this in turn implies that the above function satisfies the Laguerre inequality \eqref{laguerre}. In particular, by choosing $m=1,$ for $\mu\in(0,1)$ and $z>0$ we have that
$$\left[\varphi_1'(z)\right]^2-\varphi_1(z)\varphi_1''(z)>0.$$
Combining this with $$\varphi_2(z)=\varphi_1(z)+\frac{z}{\mu}\varphi_1'(z),$$
$$\varphi_2'(z)=\frac{\mu+1}{\mu}\varphi_1'(z)+\frac{z}{\mu}\varphi_1''(z),$$
we obtain that
\begin{align*}z&\varphi_{1}(z)\varphi_{2}'(z)-\varphi_{1}(z)\varphi_{2}(z)-z\varphi_{2}(z)\varphi_{1}'(z)\\&=
-\left[\varphi_1(z)\right]^2+\frac{z^2}{\mu}\left[\varphi_1(z)\varphi_1''(z)-\left[\varphi_1'(z)\right]^2\right]<0,\end{align*}
that is, the inequality \eqref{eq7} is valid for $\mu\in(0,1)$ and $z>0.$

{\bf B.} Now, we would like to present the following version of the monotone form of l'Hospital rule due to I.
Pinelis \cite{pin}. We note that another version of monotone form of l'Hospital rule was
proved by G.D. Anderson, M.K. Vamanamurthy and M. Vuorinen \cite{anderson2,anderson}.

\begin{lemma}\label{lopi}
Let $-\infty\leq a<b\leq\infty$ and let $f$ and $g$ be
differentiable functions on $(a,b).$ Assume that either $g'>0$
everywhere on $(a,b)$ or $g'<0$ on $(a,b).$ Furthermore, suppose
that $f(a^{+})=g(a^{+})=0$ or $f(b^{-})=g(b^{-})=0$ and $f'/g'$ is
(strictly) increasing (decreasing) on $(a,b).$ Then the ratio $f/g$
is (strictly) increasing (decreasing) too on $(a,b).$
\end{lemma}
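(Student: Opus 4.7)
The plan is to prove the lemma by constructing, for each fixed point $x_0\in(a,b)$, an auxiliary function whose sign at $x_0$ encodes the sign of $(f/g)'(x_0)$. Since the statement is symmetric under the substitutions $g\mapsto -g$ and $f\mapsto -f$ (which swap the signs of $g'$ and of $(f/g)'$ in a controlled way), I would reduce once and for all to the case $g'>0$ on $(a,b)$ with $f'/g'$ strictly increasing. In this case $g$ is strictly increasing, and the endpoint hypothesis $g(a^+)=0$ (respectively $g(b^-)=0$) forces $g(x)>0$ (respectively $g(x)<0$) throughout $(a,b)$, so division by $g(x)^2$ makes sense and $(f/g)'(x_0)=[f'(x_0)g(x_0)-f(x_0)g'(x_0)]/g(x_0)^2$ has the same sign as $f'(x_0)g(x_0)-f(x_0)g'(x_0)$.

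The key device is, for each $x_0\in(a,b)$, the auxiliary function
\[
\psi(x)=f(x)-\frac{f'(x_0)}{g'(x_0)}\,g(x),\qquad x\in(a,b).
\]
First I would compute
\[
\psi'(x)=g'(x)\!\left[\frac{f'(x)}{g'(x)}-\frac{f'(x_0)}{g'(x_0)}\right],
\]
and use $g'>0$ together with the strict monotonicity of $f'/g'$ to conclude that $\psi'(x)<0$ for $x\in(a,x_0)$ and $\psi'(x)>0$ for $x\in(x_0,b)$; thus $x_0$ is the unique minimum of $\psi$ on $(a,b)$.

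Next I would handle the two endpoint cases. If $f(a^+)=g(a^+)=0$, then $\psi(a^+)=0$, and since $\psi$ is strictly decreasing on $(a,x_0)$ one obtains $\psi(x_0)<\psi(a^+)=0$. If instead $f(b^-)=g(b^-)=0$, then $\psi(b^-)=0$, and $\psi$ strictly increasing on $(x_0,b)$ gives $\psi(x_0)<\psi(b^-)=0$. Either way,
\[
f(x_0)-\frac{f'(x_0)}{g'(x_0)}\,g(x_0)<0,
\]
and multiplying through by $g'(x_0)>0$ yields $f'(x_0)g(x_0)-f(x_0)g'(x_0)>0$, hence $(f/g)'(x_0)>0$. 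Since $x_0$ was arbitrary, $f/g$ is strictly increasing on $(a,b)$. The non-strict and strictly-decreasing cases follow by replacing strict inequalities by non-strict ones and by applying the result to $-f$ (or to $-g$) respectively.

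I do not expect any deep obstacle here; the only subtleties are bookkeeping of signs when reducing the four possible configurations of $(g',f'/g')$-monotonicity to the model case, and verifying that the endpoint limit $\psi(a^+)=0$ (or $\psi(b^-)=0$) is legitimate, which is immediate once one observes that $\psi$ is a fixed linear combination of $f$ and $g$ and both tend to $0$ at the designated endpoint. The real content of the argument is the simple observation that $\psi'$ changes sign exactly at $x_0$, which converts the monotonicity hypothesis on $f'/g'$ into a pointwise inequality for $f'g-fg'$.
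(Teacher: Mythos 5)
Your proof is correct. Note, however, that the paper itself does not prove Lemma~\ref{lopi} at all: it is quoted verbatim from Pinelis \cite{pin} (with the Anderson--Vamanamurthy--Vuorinen variant \cite{anderson2,anderson} also cited), so there is no in-paper argument to compare against. Your auxiliary-function device $\psi(x)=f(x)-\frac{f'(x_0)}{g'(x_0)}g(x)$ is a clean, self-contained route: the sign change of $\psi'$ at $x_0$ plus the vanishing of $\psi$ at the designated endpoint gives $\psi(x_0)<0$, hence $f'(x_0)g(x_0)-f(x_0)g'(x_0)>0$, and the reductions via $f\mapsto -f$, $g\mapsto -g$ dispose of the remaining sign configurations. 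This is essentially equivalent to the classical proof via the Cauchy mean value theorem (apply it on $(a,x_0)$ to get $f(x_0)/g(x_0)=f'(c)/g'(c)<f'(x_0)/g'(x_0)$ for some $c<x_0$), but your version makes the strictness of the final inequality transparent without any extra limiting argument, which is the one place the MVT proof needs a little care. All the small points you flag (sign of $g$ on $(a,b)$, legitimacy of $\psi(a^+)=0$, passing from $\psi(x_0)<\psi(x_1)\leq 0$ to strictness) check out.
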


In this paper we proved that for $z>0$ and $\mu\in\left(-\frac{5}{2},-\frac{1}{2}\right),$ $\mu\neq-\frac{3}{2},$ the Tur\'an type inequality \eqref{turan} is valid. It is natural to ask whether for other values of $\mu$ the inequality \eqref{turan} is valid or not. Based on numerical experiments we believe but ar unable to prove the following conjecture.

\vskip2mm

{\bf Conjecture}\ The Tur\'an type inequality \eqref{turan} is valid for $z>0$ and $\mu\geq\frac{3}{2}$ and fails to hold for $z>0$ and $\mu\in\left(-\frac{1}{2},\frac{3}{2}\right).$

\vskip2mm

 Below we show that to prove the first part of the above conjecture in fact it is enough to show that \eqref{turan} is valid for $z>0$ and $\mu\in\left[\frac{3}{2},\frac{5}{2}\right).$ More precisely, we show that when $\mu\geq\frac{3}{2}$ the inequality \eqref{turan} can be proved iteratively in the sense that the following chain of implications is valid: if \eqref{turan} is valid for $\mu\in\left[\frac{3}{2},\frac{5}{2}\right),$ then this implies that the Tur\'an type inequality \eqref{turan} is also valid for $\mu\in\left[\frac{5}{2},\frac{7}{2}\right),$ and this implies that \eqref{turan} holds true for $\mu\in\left[\frac{7}{2},\frac{9}{2}\right),$ and so on. For this first recall that according to J. Steinig \cite[Theorem 3]{stein} we have that $s_{\mu,\nu}(z)>0$ for $z>0$ and $\mu>\frac{1}{2}.$ On the other hand, by using the recurrence relation \cite[p. 348]{wats}
$$s_{\mu,\nu}'(z)+\frac{\nu}{z}s_{\mu,\nu}(z)=(\mu+\nu-1)s_{\mu-1,\nu-1}(z),$$ we get
    $$s_{\mu,\frac{1}{2}}'(z)+\frac{1}{2z}s_{\mu,\frac{1}{2}}(z)=\left(\mu-\frac{1}{2}\right)s_{\mu-1,\frac{1}{2}}(z),$$
    $$s_{\mu+1,\frac{1}{2}}'(z)+\frac{1}{2z}s_{\mu+1,\frac{1}{2}}(z)=\left(\mu+\frac{1}{2}\right)s_{\mu,\frac{1}{2}}(z),$$
    and consequently
    \begin{align*}&s_{\mu+1,\frac{1}{2}}'(z)s_{\mu,\frac{1}{2}}(z)-s_{\mu+1,\frac{1}{2}}(z)s_{\mu,\frac{1}{2}}'(z)\\& \ \ =
    \left(\mu+\frac{1}{2}\right)\left[s_{\nu,\frac{1}{2}}(z)\right]^2-\left(\mu-\frac{1}{2}\right)s_{\nu-1,\frac{1}{2}}(z)s_{\mu+1,\frac{1}{2}}(z).\end{align*}
    Now, suppose that the Tur\'an type inequality \eqref{turan} is valid for $\mu\in\left[\frac{3}{2},\frac{5}{2}\right)$ and $z>0.$ The above relation shows that \eqref{turan} is equivalent to the fact that the function $z\mapsto s_{\mu+1,\frac{1}{2}}(z)/s_{\mu,\frac{1}{2}}(z)$ is increasing on $(0,\infty)$ for $\mu\in\left[\frac{3}{2},\frac{5}{2}\right).$ Changing $\mu$ to $\mu-1,$ we get that the function $z\mapsto s_{\mu,\frac{1}{2}}(z)/s_{\mu-1,\frac{1}{2}}(z)$ is increasing on $(0,\infty)$ for $\mu\in\left[\frac{5}{2},\frac{7}{2}\right).$ But, this means that the function
    $$z\mapsto\frac{\left[\sqrt{z}s_{\mu+1,\frac{1}{2}}(z)\right]'}{\left[\sqrt{z}s_{\mu,\frac{1}{2}}(z)\right]'}=
    \frac{\left(\mu+\frac{1}{2}\right)\sqrt{z}s_{\mu,\frac{1}{2}}(z)}{\left(\mu-\frac{1}{2}\right)\sqrt{z}s_{\mu-1,\frac{1}{2}}(z)}=
    \frac{\mu+\frac{1}{2}}{\mu-\frac{1}{2}}\frac{s_{\mu,\frac{1}{2}}(z)}{s_{\mu-1,\frac{1}{2}}(z)}$$
    is also increasing on $(0,\infty)$ for $\mu\in\left[\frac{5}{2},\frac{7}{2}\right).$ Now, since $\sqrt{z}s_{\mu+1,\frac{1}{2}}(z)\to0$ and $\sqrt{z}s_{\mu,\frac{1}{2}}(z)\to0$ as $z\to0,$ by using the monotone form of l'Hospital's rule (see Lemma \ref{lopi}) we obtain that the function
    $$z\mapsto\frac{\sqrt{z}s_{\mu+1,\frac{1}{2}}(z)}{\sqrt{z}s_{\mu,\frac{1}{2}}(z)}=\frac{s_{\mu+1,\frac{1}{2}}(z)}{s_{\mu,\frac{1}{2}}(z)}$$
    is also increasing on $(0,\infty)$ for $\mu\in\left[\frac{5}{2},\frac{7}{2}\right),$ and this is equivalent to inequality \eqref{turan}. A similar procedure shows that the fact that \eqref{turan} is valid for $\mu\in\left[\frac{5}{2},\frac{7}{2}\right)$ implies that it is also valid for $\mu\in\left[\frac{7}{2},\frac{9}{2}\right),$ and so on.

{\bf C.} We note that by using the above argument it can be shown that for $z\in(0,\xi_{\mu-1,1})$ and $\mu\in\left(-\frac{1}{2},\frac{1}{2}\right)$ the Tur\'an type inequality \eqref{turan} is reversed, which is in agreement with the second part of the above conjecture. Namely, taking into account the fact that \eqref{turan} is valid for $\mu\in\left(-\frac{3}{2},-\frac{1}{2}\right)$ and $z>0,$ we get that the function $z\mapsto s_{\mu+1,\frac{1}{2}}(z)/s_{\mu,\frac{1}{2}}(z)$ is decreasing on $(\xi_{\mu,n-1},\xi_{\mu,n}),$ $n\in\{1,2,\dots\},$ for $\mu\in\left(-\frac{3}{2},-\frac{1}{2}\right).$ Now, changing $\mu$ to $\mu-1$ we get that the function $z\mapsto s_{\mu,\frac{1}{2}}(z)/s_{\mu-1,\frac{1}{2}}(z)$ is also decreasing on $(0,\xi_{\mu-1,1})$  for $\mu\in\left(-\frac{1}{2},\frac{1}{2}\right).$ But, this means that the function
    $$z\mapsto\frac{\left[\sqrt{z}s_{\mu+1,\frac{1}{2}}(z)\right]'}{\left[\sqrt{z}s_{\mu,\frac{1}{2}}(z)\right]'}=
    \frac{\left(\mu+\frac{1}{2}\right)\sqrt{z}s_{\mu,\frac{1}{2}}(z)}{\left(\mu-\frac{1}{2}\right)\sqrt{z}s_{\mu-1,\frac{1}{2}}(z)}=
    \frac{\mu+\frac{1}{2}}{\mu-\frac{1}{2}}\frac{s_{\mu,\frac{1}{2}}(z)}{s_{\mu-1,\frac{1}{2}}(z)}$$
    is increasing on $(0,\xi_{\mu-1,1})$ for $\mu\in\left(-\frac{1}{2},\frac{1}{2}\right).$ Now, since $\sqrt{z}s_{\mu+1,\frac{1}{2}}(z)\to0$ and $\sqrt{z}s_{\mu,\frac{1}{2}}(z)\to0$ as $z\to0,$ by using Lemma \ref{lopi} we obtain that the function
    $$z\mapsto\frac{\sqrt{z}s_{\mu+1,\frac{1}{2}}(z)}{\sqrt{z}s_{\mu,\frac{1}{2}}(z)}=\frac{s_{\mu+1,\frac{1}{2}}(z)}{s_{\mu,\frac{1}{2}}(z)}$$
    is also increasing on $(0,\xi_{\mu-1,1})$ for $\mu\in\left(-\frac{1}{2},\frac{1}{2}\right),$ and this is equivalent to the reversed form of the inequality \eqref{turan}. Moreover, changing again $\mu$ to $\mu-1$ we get that the function $z\mapsto s_{\mu,\frac{1}{2}}(z)/s_{\mu-1,\frac{1}{2}}(z)$ is increasing on $(0,\xi_{\mu-2,1})$  for $\mu\in\left(\frac{1}{2},\frac{3}{2}\right).$ But, this means that the function
    $$z\mapsto\frac{\left[\sqrt{z}s_{\mu+1,\frac{1}{2}}(z)\right]'}{\left[\sqrt{z}s_{\mu,\frac{1}{2}}(z)\right]'}=
    \frac{\left(\mu+\frac{1}{2}\right)\sqrt{z}s_{\mu,\frac{1}{2}}(z)}{\left(\mu-\frac{1}{2}\right)\sqrt{z}s_{\mu-1,\frac{1}{2}}(z)}=
    \frac{\mu+\frac{1}{2}}{\mu-\frac{1}{2}}\frac{s_{\mu,\frac{1}{2}}(z)}{s_{\mu-1,\frac{1}{2}}(z)}$$
    is increasing on $(0,\xi_{\mu-2,1})$ for $\mu\in\left(\frac{1}{2},\frac{3}{2}\right).$ Now, since $\sqrt{z}s_{\mu+1,\frac{1}{2}}(z)\to0$ and $\sqrt{z}s_{\mu,\frac{1}{2}}(z)\to0$ as $z\to0,$ by using again the monotone form of l'Hospital's rule (see Lemma \ref{lopi}) we obtain that the function
    $$z\mapsto\frac{\sqrt{z}s_{\mu+1,\frac{1}{2}}(z)}{\sqrt{z}s_{\mu,\frac{1}{2}}(z)}=\frac{s_{\mu+1,\frac{1}{2}}(z)}{s_{\mu,\frac{1}{2}}(z)}$$
    is also increasing on $(0,\xi_{\mu-2,1})$ for $\mu\in\left(\frac{1}{2},\frac{3}{2}\right),$ and this is equivalent to the fact that the Tur\'an type inequality \eqref{turan} is valid for $z\in(0,\xi_{\mu-2,1})$ and $\mu\in\left(\frac{1}{2},\frac{3}{2}\right).$ Combining this with part {\bf B} of these concluding remarks we get that the inequality \eqref{turan} is valid for all $z\in(0,\xi_{\mu-m,1})$ and $m-\frac{3}{2}<\mu<m-\frac{1}{2},$ $m\in\{2,3,\dots\}.$

{\bf D.} Finally, observe that the right-hand side of \eqref{turan} for $\mu>\frac{1}{2}$ is negative, and thus it is natural to ask whether the next Tur\'an type inequality (or its reverse) is valid for $\mu>\frac{1}{2}$ and $z>0$
    \begin{equation}\label{conj}\Delta_{\mu}(z):=\left[s_{\mu,\frac{1}{2}}(z)\right]^2-s_{\mu-1,\frac{1}{2}}(z)s_{\mu+1,\frac{1}{2}}(z)>0.\end{equation}
    However, for example the Tur\'an expression $\Delta_{\frac{3}{2}}(z)$ has an infinity of changes of sign on $(0,\infty).$ For this first observe that
    $$s_{\frac{1}{2},\frac{1}{2}}(z)=\frac{1-\cos z}{\sqrt{z}},\ s_{\frac{3}{2},\frac{1}{2}}(z)=\frac{z-\sin z}{\sqrt{z}}\ \ \ \mbox{and}
    \ \ \ s_{\frac{5}{2},\frac{1}{2}}(z)=\frac{z^2+2\cos z-2}{\sqrt{z}}.$$
    The first relation follows from the fact that when $\mu=\nu$ the Lommel function of the first kind can be expressed with the aid of the Struve function of order $\nu,$ for more details see \cite[p. 124]{stein}. The third relation follows from \cite[p. 348]{wats}
    $$s_{\mu+2,\nu}(z)=z^{\mu+1}-\left[(\mu+1)^2-\nu^2\right]s_{\mu,\nu}(z)$$
    by taking $\mu=\nu=\frac{1}{2},$ while the second relation follows from \eqref{diff} by choosing $\mu=\frac{5}{2}$ and $\nu=\frac{1}{2}.$
    Now, since $\eta(z):=z\Delta_{\frac{3}{2}}(z)=(z^2-4)\cos z+\cos^2z-2z\sin z+3$ takes the values
    $$\eta((2n-1)\pi)={8-(2n-1)^2\pi^2}<0\ \ \ \mbox{and}\ \ \
    \eta(2n\pi)=(2n\pi)^2>0$$
for $n\in\{1,2,\dots\},$ it is clear that indeed $\Delta_{\frac{3}{2}}(z)$ has an infinity of changes of sign on $(0,\infty).$

\end{document}